\newtheorem*{thm}{Theorem}
\newtheorem*{lemma}{Lemma}
\newcommand{\diam}{\operatorname{diam}}
\begin{document}

\title[]{Quadratic Crofton and sets that see \\themselves as little as possible}

\author[]{Stefan Steinerberger}
\address{Department of Mathematics, University of Washington, Seattle, WA 98195, USA} \email{steinerb@uw.edu}

\keywords{Convex domains, Interaction Energy, Crofton Formula}
\subjclass[2010]{49Q20, 28A75} 
\thanks{S.S. is supported by the NSF (DMS-2123224) and the Alfred P. Sloan Foundation.}

\begin{abstract} Let $\Omega \subset \mathbb{R}^2$ and let $\mathcal{L} \subset \Omega$ be a one-dimensional set with finite length $L =|\mathcal{L}|$. We are interested in minimizers of an energy functional that measures the size of a set projected onto itself in all directions: we are thus asking for
sets that see themselves as little as possible (suitably interpreted). Obvious minimizers of the functional are subsets
of a straight line but this is only possible for $L \leq \mbox{diam}(\Omega)$. The problem has an equivalent formulation: the expected number of intersections between a random line and $\mathcal{L}$ depends only on the length of $\mathcal{L}$ (Crofton's formula). We are interested in sets $\mathcal{L}$ that minimize the variance of the expected number of intersections. We solve the problem for convex $\Omega$ and slightly less than half of all values of $L$: there, a minimizing set is the union of copies of the boundary and a line segment.
\end{abstract}
\maketitle

\section{Introduction and Results}
\subsection{Introduction} The purpose of this short paper is to introduce a problem at the interface of the calculus of variations and integral/random geometry.
The problem is easily stated in any dimension (see below) but since we have already found the case of convex domains $\Omega \subset \mathbb{R}^2$ to be challenging, we will mostly focus on that case. Let us first consider a one-dimensional rectifiable set $\mathcal{L} \subset \mathbb{R}^2$ with positive length $L$. We define a notion of energy as
$$ E(\mathcal{L}) = \int_{\mathcal{L}} \int_{\mathcal{L}}\frac{\left|\left\langle n(x), y - x \right\rangle  \left\langle  y - x, n(y) \right\rangle   \right| }{\|x - y\|^{3}}~d \sigma(x) d\sigma(y),$$
where $x,y$ are elements in the set, $n(x)$ and $n(y)$ denote the normal vectors in $x$ and $y$, respectively, and $\sigma$ is the arclength measure. This functional first arose in work of Chang, Dabrowski, Orponen and Villa \cite[Appendix A.1]{chang} on sets with nearly maximal Favard length and later in a different context in work of the author \cite{stein}.
\begin{center}
\begin{figure}[h!]
\begin{tikzpicture}[scale=3]
\draw [thick] (0,0) to[out=30, in=150] (3,0);
\filldraw (0.6, 0.28) circle (0.025cm);
\filldraw (2.4, 0.28) circle (0.025cm);
\draw[thick, ->] (0.6, 0.28) -- (0.52, 0.5);
\node at (0.7, 0.5) {$n(x)$};
\draw[thick, ->] (2.4, 0.28) -- (2.48, 0.5);
\node at (2.65, 0.5) {$n(y)$};
\node at (0.65, 0.2) {$x$};
\node at (2.3, 0.2) {$y$};
\draw [dashed] (0.6, 0.28) -- (2.4, 0.28);
\end{tikzpicture}
\end{figure}
\end{center}
\vspace{-10pt}
One way of thinking about the functional is that it measures the behavior of the set when projected onto itself in the following sense:  consider $x,y \in \mathcal{L}$ and let us take small neighborhoods around $x$ and $y$ (we may think of these as approximately being short line segments). 
We could then ask for the expected size of the projection of one such line segment onto the other under a `random' projection. Equivalently, we can ask for the likelihood that a `random' line intersects both line segments.
 For the canonical choice of `random' line (the one that is invariant under rotation and translation, also known as the kinematic measure), the arising geometric expression is exactly the integrand in the energy functional.
The quantity measures whether it is easy to see one neighborhood from the other. In order for them to be `nearly' invisible, it suffices if one (or both) of the line segments have a normal vector that is nearly orthogonal to the direction of line of sight $x-y$. Moreover, the likelihood decreases with distance.
The functional vanishes if $\mathcal{L}$ is a subset of a line. It therefore makes sense to restrict $\mathcal{L}$ to lie in
some fixed bounded domain $\Omega \subset \mathbb{R}^2$. We can now state our main problem.
\begin{quote}
\textbf{Problem.} Among all one-dimensional rectifiable $\mathcal{L} \subset \Omega \subset \mathbb{R}^2$ of fixed length, which one minimizes $E(\mathcal{L})$ and how does the minimizer depend on the enclosing set $\Omega$?
\end{quote}
An interesting aspect of the problem is that for larger $L$, the set cannot actually avoid having large projections onto itself: the set is guaranteed to see itself and the problem becomes to arrange things so that these projections are roughly of comparable size in all projections. In particular, we will prove that the functional has to grow quadratically in the length and determine the leading order and the next order term in the expansion (see \S 1.2).
For convex $\Omega \subset \mathbb{R}^2$ and suitable length $L$, the solution is relatively simple.

\begin{thm} Let $\Omega \subset \mathbb{R}^2$ be a bounded, convex domain with $C^1$-boundary. If 
$$  0 \leq L - n |\partial \Omega| \leq \diam(\Omega) \qquad \mbox{for some} \quad n \in \mathbb{N},$$
 then among all one-dimensional rectifiable sets $\mathcal{L} \subset \Omega$ of length $L$ the energy
$$ E(\mathcal{L}) = \int_{\mathcal{L}} \int_{\mathcal{L}}\frac{\left|\left\langle n(x), y - x \right\rangle  \left\langle  y - x, n(y) \right\rangle   \right| }{\|x - y\|^{3}}~d \sigma(x) d\sigma(y)$$
is minimized by the union of $n$ copies of $\partial \Omega$ and a line segment of length $L - n |\partial \Omega|$.
\end{thm}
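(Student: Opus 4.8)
The plan is to convert the variational problem into integral geometry via a quadratic Crofton formula and then to solve the resulting problem by an elementary integer optimization. Parametrize lines in $\RR^2$ by $(\theta,p)\in[0,\pi)\times\RR$, with $\ell_{\theta,p}=\{z:\langle z,(\cos\theta,\sin\theta)\rangle=p\}$, and let $\mu=dp\,d\theta$ be the (unnormalized) kinematic measure. For rectifiable $\mathcal L$ set $N(\ell)=\#(\mathcal L\cap\ell)$, a finite nonnegative integer for $\mu$-a.e.\ $\ell$; the classical Cauchy--Crofton formula gives $\int N\,d\mu=2L$. First I would establish the identity
\[ \int N(\ell)^2\,d\mu(\ell)\;=\;2L\;+\;E(\mathcal L). \]
This comes from $N^2=N+\#\{(x,y)\in(\mathcal L\cap\ell)^2:x\neq y\}$ together with the change of variables $(x,y)\mapsto(\theta,p)$ on pairs of points of $\mathcal L$: the Jacobian of this map is exactly $|\langle n(x),y-x\rangle\langle y-x,n(y)\rangle|\cdot\|x-y\|^{-3}$, and a line with $N(\ell)$ intersection points is the image of $N(\ell)(N(\ell)-1)$ ordered pairs. (The integrand is $O(\|x-y\|)$ near the diagonal so all integrals converge; one may also verify the identity directly on $\mathcal L=\partial B(0,1)$, where both sides equal $4\pi$.) Since $L$ is fixed, minimizing $E(\mathcal L)$ is the same as minimizing $\int N^2\,d\mu$ — equivalently, minimizing the variance of $N$ under the kinematic probability measure, the formulation advertised in the abstract.

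\textbf{Lower bound.} Next I would use two structural facts about the admissible $N$. Convexity of $\Omega$: any line disjoint from $\Omega$ has $N=0$, so $N$ is supported on $A=\{\ell:\ell\cap\overline\Omega\neq\emptyset\}$, and Cauchy's formula gives $\mu(A)=\int_0^\pi w_\Omega(\theta)\,d\theta=|\partial\Omega|=:M$. Integrality: $N\in\ZZ_{\ge 0}$ $\mu$-a.e. Write $L=nM+r$ with $0\le r\le\diam(\Omega)$; since the perimeter of a convex body is at least twice its diameter, $\diam(\Omega)\le M/2$, hence $0\le 2r\le M$. Two consecutive integers cannot strictly straddle an integer, so $(N-2n)(N-2n-1)\ge0$ pointwise on $A$, and therefore
\[ 0\le\int_A (N-2n)(N-2n-1)\,d\mu \;=\; \int_A N^2\,d\mu-(4n+1)\!\int_A N\,d\mu+2n(2n+1)M. \]
Substituting $\int_A N\,d\mu=2L=2(nM+r)$ and rearranging gives $\int N^2\,d\mu\ge 4n^2M+8nr+2r$, i.e.\ $E(\mathcal L)\ge 4n^2M-2nM+8nr$ for every rectifiable $\mathcal L\subset\Omega$ of length $L$.

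\textbf{The extremal configuration.} Let $\mathcal L^\ast$ be the union of $n$ copies of $\partial\Omega$ and a straight segment $S$ of length $r\le\diam(\Omega)$ placed inside $\Omega$. For $\mu$-a.e.\ $\ell$: if $\ell$ misses $\Omega$ then $N^\ast(\ell)=0$; if $\ell$ meets $\Omega$ it crosses $\partial\Omega$ in exactly two points (support lines, and lines through any boundary segments, form a $\mu$-null set), so it crosses the $n$ copies $2n$ times, and it crosses $S$ at most once. Thus on $A$ one has $N^\ast=2n$ or $2n+1$ according as $\ell$ misses or meets $S$. Since $\mu(\{\ell:\ell\cap S\neq\emptyset\})=2r$ (linear Crofton for $S$) and $S\subset\Omega$, we get $N^\ast=2n+1$ on a set of measure $2r$ and $N^\ast=2n$ on the complementary set of measure $M-2r\ge0$ inside $A$. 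Hence $\int N^{\ast 2}\,d\mu=(2n+1)^2(2r)+(2n)^2(M-2r)=4n^2M+8nr+2r$, matching the lower bound, and $\int N^\ast\,d\mu=2nM+2r=2L$ confirms $|\mathcal L^\ast|=L$; so $\mathcal L^\ast$ is a minimizer.

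\textbf{Main obstacle.} The combinatorial core (the last two paragraphs) is short; the real work is the quadratic Crofton step, i.e.\ making the change of variables rigorous for a general rectifiable set — where normals exist only $\mathcal H^1$-a.e., where one must discard the $\mu$-null set of lines meeting $\mathcal L$ in infinitely many points, and where the near-diagonal integrability must be checked. A secondary point is the meaning of ``$n$ copies of $\partial\Omega$'': strictly disjoint copies cannot all lie inside $\Omega$, so one either works with integer-multiplicity rectifiable sets (the natural class here, for which $\mathcal L^\ast$ is a genuine minimizer) or reads the bound as an infimum attained in the limit of $n$ nested boundary curves shrinking onto $\partial\Omega$. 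Finally, the two hypotheses on $r$ are used exactly once each: $r\le\diam(\Omega)$ lets the length-$r$ remainder be a straight segment — the only curve a generic line meets at most once — fitting inside $\Omega$, while $\diam(\Omega)\le M/2$ is precisely what makes $M-2r\ge0$, so that the lower bound is attainable.
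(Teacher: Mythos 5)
Your proof is correct and follows the same overall strategy as the paper: convert $E(\mathcal{L})$ into the second moment of the intersection count $N$ via the quadratic Crofton identity, exploit the integrality of $N$ to get a lower bound on $\int N^2\,d\mu$ given $\int N\,d\mu = 2L$, and verify that $n$ copies of $\partial \Omega$ plus a segment attains it. The one genuine difference is the key lemma. The paper first normalizes the kinematic measure on lines meeting $\Omega$ to a probability measure and proves a standalone probabilistic lemma: an integer-valued random variable $X\geq 0$ satisfies $\mathbb{E}X^2 \geq (\mathbb{E}X)^2 + \left\{\mathbb{E}X\right\} - \left\{\mathbb{E}X\right\}^2$, with equality iff $X$ is supported on two consecutive integers; its proof is a mass-transport/rearrangement argument. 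You instead integrate the pointwise inequality $(N-2n)(N-2n-1)\geq 0$, which needs no normalization and carries its own equality characterization ($N\in\{2n,2n+1\}$ a.e.) for free. The two are equivalent in content --- your inequality with the integer $\lfloor \mathbb{E}X\rfloor$ in place of $2n$ is exactly the paper's lemma --- but your version is more streamlined and sidesteps the somewhat informal ``move some mass towards $k$ and $k+1$'' step in the paper's proof. Your closing remarks are also well calibrated: the paper likewise treats the quadratic Crofton identity as a black box (citing \cite{stein} rather than carrying out the change of variables for general rectifiable sets) and does not address the multiplicity issue hidden in the phrase ``$n$ copies of $\partial\Omega$'', so these are shared, not additional, gaps.
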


For the unit disk $\Omega = D \subset \mathbb{R}^2$, this solves the problem for all $L$ of the form $L \in [0,2] \cup [2\pi, 2\pi + 2] \cup \dots$.
and the solution is relatively simple, however, other cases remain open. We give bounds for $E(\mathcal{L})$ in terms
of $L$ (see \S 1.2 below). Our understanding of extremal sets for other values of $L$ remains limited even for relatively simple $\Omega$ such
as the unit disk or the unit square.

\begin{center}
\begin{figure}[h!]
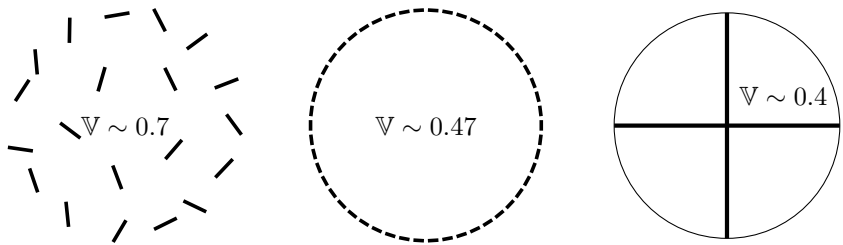

\begin{tikzpicture}
\draw (0,0) circle (1.5cm);
\draw[ultra thick] (-1.5,0) -- (1.5,0);
\draw[ultra thick] (0,-1.5) -- (0,1.5);
\node at (-8,0) {\includegraphics[width=3.2cm]{pic2.pdf}};
\node at (-8, 0) {$\mathbb{V} \sim 0.7$};
\node at (-4,0) {\includegraphics[width=3.2cm]{pic1.pdf}};
\node at (-4, 0) {$\mathbb{V} \sim 0.47$};
\node at (0.75,0.4) {$\mathbb{V} \sim 0.4$};
\end{tikzpicture}
\caption{Three sets of length $L=4$ in the unit disk in decreasing energy (proportional to the variance of a random variable, see \S 1.2).}
\end{figure}
\end{center}

Many other questions remain: are the solutions `periodic' in the sense that the optimal solution for $L = |\partial \Omega| + X$ is simply the optimal solution for $L = X$ and an additional copy of the boundary $\partial \Omega$?  In light of the Theorem, this is certainly conceivable (there is another viewpoint, discussed below, that could also be interpreted as supporting evidence). This would reduce the problem to the range $\diam(\Omega) \leq L \leq |\partial \Omega|$.  Another natural question is that the energy functional rewards nearby points $x,y \in \mathcal{L}$ if at least one of the tangent vectors $n(x), n(y)$ is roughly orthogonal to $x-y$. This appears to be indicative of some form of implicit regularization: one could ask, for example, whether minimizing sets $\mathcal{L}$ are necessarily the union of finitely many piecewise-differentiable curves.
Needless to say, the question is also of obvious interest when $\Omega \subset \mathbb{R}^2$ is not convex. Moreover, the question is also of obvious interest in higher dimensions and our proof generalizes.
\begin{thm} Let $\Omega \subset \mathbb{R}^n$ be a bounded, convex domain with $C^1$-boundary. There exists a constant $c_{\Omega}$ such that
if 
$$  0 \leq L - m |\partial \Omega| \leq c_{\Omega} \qquad \mbox{for some} \quad m \in \mathbb{N},$$
then among all $(n-1)-$dimensional piecewise differentiable $\Sigma \subset \Omega$ with surface area $\mathcal{H}^{n-1}(\Sigma) = L$ the
energy
$$ E(\Sigma) = \int_{\Sigma} \int_{\Sigma}\frac{\left|\left\langle n(x), y - x \right\rangle  \left\langle  y - x, n(y) \right\rangle   \right| }{\|x - y\|^{n+1}}~d \sigma(x) d\sigma(y)$$
is minimized by $m$ copies of the boundary and a segment of a hyperplane. 
\end{thm}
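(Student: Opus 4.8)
The plan is to adapt to $\mathbb{R}^n$ the variance argument behind the planar case. The first step is the integral-geometric reformulation of $E$. Writing $N(\ell)=\#(\ell\cap\Sigma)$ for the (a.e.\ finite) number of transversal intersections of an affine line $\ell$ with a piecewise-differentiable $\Sigma$, Crofton's formula gives $\int N(\ell)\,d\ell=c_n\,\mathcal{H}^{n-1}(\Sigma)$ for the kinematic measure $d\ell$ on affine lines and a dimensional constant $c_n$, while the integrand of $E$ is precisely the Jacobian of the map sending an ordered pair of distinct points of $\Sigma$ to the line through them; hence $\int N(\ell)(N(\ell)-1)\,d\ell=E(\Sigma)$, i.e.
\[
E(\Sigma)=\int N(\ell)^2\,d\ell-c_n\,\mathcal{H}^{n-1}(\Sigma).
\]
(The case $n=2$ of this identity underlies the introduction and the two-dimensional theorem; piecewise-differentiability of $\Sigma$ is what makes the $E$-integrand integrable near the diagonal of $\Sigma\times\Sigma$ and $N$ finite for a.e.\ $\ell$.) Since $\Sigma\subset\Omega$, only lines meeting $\Omega$ contribute, and by Cauchy's formula — Crofton applied to the closed hypersurface $\partial\Omega$, which a.e.\ such line meets in exactly two points — the kinematic measure of the set of lines meeting the convex body $\Omega$ equals $\tfrac{c_n}{2}|\partial\Omega|$, where $|\partial\Omega|=\mathcal{H}^{n-1}(\partial\Omega)$.

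Normalizing $d\ell$ to a probability measure on the set of lines meeting $\Omega$, the count $N$ becomes a nonnegative integer-valued random variable with
\[
\mathbb{E}\,N=\frac{2\,\mathcal{H}^{n-1}(\Sigma)}{|\partial\Omega|}=\frac{2L}{|\partial\Omega|}=:\mu,\qquad
E(\Sigma)=\frac{c_n|\partial\Omega|}{2}\bigl(\mathbb{E}\,N^2-\mu\bigr),
\]
so that, $L$ being fixed, minimizing $E(\Sigma)$ over admissible $\Sigma$ is the same as minimizing $\mathbb{E}\,N^2$, equivalently (as $\mathbb{E}\,N$ is fixed) minimizing $\mathbb{V}(N)$. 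I would then invoke the elementary inequality that every integer-valued random variable $X$ with mean $\mu$ satisfies $\mathbb{V}(X)\ge f(1-f)$, where $f\in[0,1)$ is the fractional part of $\mu$, with equality exactly when $X$ takes only the values $\lfloor\mu\rfloor$ and $\lceil\mu\rceil$; this follows by taking expectations in the pointwise bound $(k-\mu)^2\ge(1-2f)(k-\mu)+f(1-f)$, valid for all integers $k$ since with $u:=k-\lfloor\mu\rfloor\in\mathbb{Z}$ it reduces to $u(u-1)\ge0$.

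Now take $c_\Omega$ to be the largest $(n-1)$-dimensional volume of a hyperplane section of $\Omega$; a one-line projection estimate — $\partial\Omega$ covers the shadow of $\Omega$ onto any hyperplane at least twice — shows $c_\Omega\le\tfrac12|\partial\Omega|$. Under the hypothesis $0\le L-m|\partial\Omega|\le c_\Omega$ we then get $\lfloor\mu\rfloor=2m$ and fractional part $t:=2(L-m|\partial\Omega|)/|\partial\Omega|\in[0,1]$, hence $\mathbb{V}(N)\ge t(1-t)$ for every admissible $\Sigma$. It remains to exhibit $\Sigma_0$ attaining this. Take $\Sigma_0$ to be $m$ copies of $\partial\Omega$ together with a flat piece $D$ of a hyperplane with $\mathcal{H}^{n-1}(D)=L-m|\partial\Omega|$, which fits inside $\Omega$ precisely because $L-m|\partial\Omega|\le c_\Omega$. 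For a.e.\ line meeting $\Omega$ one has $N_0=2m+\mathbf{1}[\ell\cap D\ne\varnothing]\in\{2m,2m+1\}$, and Crofton applied to $D$ gives $\mathbb{P}(\ell\cap D\ne\varnothing)=2\mathcal{H}^{n-1}(D)/|\partial\Omega|=t$; thus $\mathbb{E}\,N_0=\mu$ and $\mathbb{V}(N_0)=t(1-t)$. By the previous paragraph $\Sigma_0$ minimizes $E$.

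I expect the conceptual content to lie entirely in the two reformulations above, after which the argument is short. The one genuinely technical point is making the identity $E(\Sigma)=\int N^2\,d\ell-c_n\,\mathcal{H}^{n-1}(\Sigma)$ rigorous in $\mathbb{R}^n$ — transversality and finiteness of $N$ for a.e.\ line, integrability of the $E$-integrand near the diagonal, and the Jacobian computation — which is the verbatim analogue of the two-dimensional case. A minor point to address is the meaning of ``$m$ copies of $\partial\Omega$'': it should be read as a rectifiable set carrying multiplicity $m$ (equivalently, the $\varepsilon\to0$ limit of $m$ disjoint inner homothets of $\partial\Omega$), for which $N_0\equiv 2m$ on a.e.\ line meeting $\Omega$ and the identity for $E$ persists. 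Everything else — Cauchy's formula, the bound $c_\Omega\le\tfrac12|\partial\Omega|$, and the variance inequality — is elementary.
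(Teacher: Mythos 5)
Your proposal is correct and follows essentially the same route as the paper: rewrite $E(\Sigma)$ via quadratic Crofton as $\int N^2\,d\ell$ up to an affine function of $L$, normalize the kinematic measure on lines meeting $\Omega$ to a probability space with $\mathbb{E}N = 2L/|\partial\Omega|$, apply the variance lower bound for integer-valued random variables, and check that $m$ copies of $\partial\Omega$ plus a hyperplane piece has $N\in\{2m,2m+1\}$ almost surely, with $c_\Omega$ equal to the maximal hyperplane section (exactly the paper's choice). The one point where you diverge is the proof of the variance lemma: the paper argues by moving probability mass toward $\lfloor\mu\rfloor$ and $\lceil\mu\rceil$ while preserving the mean, whereas your pointwise minorant $(k-\mu)^2\ge(1-2f)(k-\mu)+f(1-f)$ gives the inequality and its equality case in one line; you also supply details the paper leaves implicit, namely $c_\Omega\le|\partial\Omega|/2$ (so the fractional part of $\mathbb{E}N$ is genuinely $t$) and the reading of ``$m$ copies'' as multiplicity.
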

One admissible choice for the constant $c_{\Omega}$ is the largest $(n-1)-$dimensional volume of an intersection of $\Omega$ with a hyperplane. Just as in the two-dimensional case, our understanding of the problem outside the range covered by the Theorem is limited.

\subsection{Quadratic Crofton.} We will now give an alternative derivation that casts the question in a different light (and can help motivate some of the results). Fix a set $\mathcal{L} \subset \mathbb{R}^2$ and consider `random' lines with respect to the kinematic measure $\mu$. The kinematic measure corresponds to picking a uniform distribution over all angles and then a uniform distribution in how far the line is displaced from the origin (corresponding to the differential form $d \phi \wedge dx$). For any given line $\ell$, we denote the number of intersections of the set $\mathcal{L}$ with the line $\ell$ by $n_{\ell}(\mathcal{L})$. Crofton's formula says that the expected number of intersections only depends on the length 
$$ |\mathcal{L}| = \frac{1}{4} \int n_{\ell}(\mathcal{L}) d\mu(\ell).$$
We can use this formula to deduce that among those lines $\ell$ that intersect $\Omega$, the expected number of intersections with $\mathcal{L}$ is $2 L/ |\partial \Omega|$. This follows from applying Crofton's formula a second time, using that ($\mu-$almost all) lines intersect a convex domain exactly twice and thus
$$ \frac{1}{4} \int 1_{\ell \cap \partial \Omega \neq \emptyset} ~d\mu(\ell) =  \frac{1}{4}  \int \frac{n_{\ell}(\partial \Omega) }{2} ~d\mu(\ell) = \frac{|\partial \Omega|}{2}.$$
It does not matter how the one-dimensional set $\mathcal{L}$ is arranged, the only relevant quantity is its length.
This natural invariance is already of interest in itself and is the beginning of integral geometry (see \cite{santa, santa2}). 
 \begin{quote}
 \textbf{Problem.} Among all one-dimensional sets $\mathcal{L} \subset \Omega \subset \mathbb{R}^2$ with fixed length $L = |\mathcal{L}|$, which one minimizes the \mbox{variance} of the average number of intersections among the lines hitting $\Omega$?
 \end{quote}
 We note that the kinematic measure $\mu$ is not a probability measure: the question therefore has to be understood as the variance restricted to a compact space which we take to be the probability space of all lines that actually intersect $\Omega$ (which, through normalization, can then be turned into a probability space, see \S 2.2).
  Examples (see Fig. 1) show that this number does indeed depend strongly on $\mathcal{L}$. We start by simplifying the question a little. The variance, conditioning on those lines that intersect the domain, is proportional to
\begin{align*}
  \int 1_{\ell \cap \partial \Omega \neq \emptyset} \left( n_{\ell}(\mathcal{L}) - \frac{2L}{|\partial \Omega|}\right)^2 d\mu(\ell) &=  \int  n_{\ell}(\mathcal{L})^2 d\mu(\ell) -  \frac{4L}{|\partial \Omega|} \int n_{\ell}(\mathcal{L})d\mu(\ell) \\
 &+  \int 1_{\ell \cap \partial \Omega \neq \emptyset} \cdot \frac{4L^2}{|\partial \Omega|^2} d\mu(\ell) 
\end{align*}
which, via two applications of Crofton's formula, can be simplified to
$$  \int 1_{\ell \cap \partial \Omega \neq \emptyset} \left( n_{\ell}(\mathcal{L}) - \frac{2L}{|\partial \Omega|}\right)^2 d\mu(\ell) =   \int  n_{\ell}(\mathcal{L})^2 d\mu(\ell) - \frac{8 L^2}{|\partial \Omega|}.$$
We are thus invited to consider the problem of minimizing the energy functional $E_{\Omega}: \mathbb{R}_{\geq 0} \rightarrow \mathbb{R}_{\geq 0}$ introduced via
$$ E_{\Omega}(L) = \inf_{ |\mathcal{L}| =L} ~ \frac{1}{4} \int n_{\ell}(\mathcal{L})^2 d\mu(\ell),$$
where the infimum is taken over all 1-rectifiable sets of length $L$.
The factor $1/4$ is introduced 
for convenience (to simplify the comparison with Crofton's formula).
 A computation (carried out in \cite{stein}) shows that, for sufficiently regular $\mathcal{L} \subset \mathbb{R}^2$
 $$ \frac{1}{4} \int n_{\ell}(\mathcal{L})^2 d\mu(\ell) - |\mathcal{L}| = \frac{1}{2} \int_{\mathcal{L}} \int_{\mathcal{L}}\frac{\left|\left\langle n(x), y - x \right\rangle  \left\langle  y - x, n(y) \right\rangle   \right| }{\|x - y\|^{3}}~d \sigma(x) d\sigma(y)$$
which establishes the equivalence to the problem stated above. We can use this to deduce some immediate bounds on $E_{\Omega}(L)$.
Since
$n_{\ell}$ is integer-valued, we have
$$ E_{\Omega}(L) = \inf_{ |\mathcal{L}| =L} ~ \frac{1}{4} \int n_{\ell}(\mathcal{L})^2 d\mu(\ell) \geq \inf_{ |\mathcal{L}| =L} ~ \frac{1}{4} \int n_{\ell}(\mathcal{L}) d\mu(\ell) = L.$$
Moreover, the inequality is sharp if $\mathcal{L}$ has the property that each line intersects it at most once (for almost all lines with respect to the kinematic measure): this is the case when $\mathcal{L}$ is itself a subset of a line.  Moreover, a variance is always nonnegative and from this we deduce a second lower bound
$$ E_{\Omega}(L) = \inf_{ |\mathcal{L}| =L} ~ \frac{1}{4} \int n_{\ell}(\mathcal{L})^2 d\mu(\ell) \geq \frac{2L^2}{|\partial \Omega|}.$$
We show that this is actually fairly acurate and $\left|E_{\Omega}(L) - 2L^2/|\partial \Omega| \right| \lesssim 1$.
\begin{thm} Let $\Omega \subset \mathbb{R}^2$ be a bounded, convex domain with $C^1$ boundary. Then 
$$   \frac{|\partial \Omega|}{2} \left\{  \frac{2L}{|\partial \Omega|} \right\} \left(1 - \left\{  \frac{2L}{|\partial \Omega|} \right\}\right) \leq E_{\Omega}(L) - \frac{2L^2}{|\partial \Omega|}  \leq \frac{|\partial \Omega|}{4},$$
where $\left\{ x \right\} = x - \left\lfloor x \right\rfloor$.
The lower bound is sharp if $n |\partial \Omega| \leq L \leq n |\partial \Omega|  + \emph{diam}(\Omega)$ for some $n \in \mathbb{N}$: an extremal set is given by $n$ copies of $\partial \Omega$ and a line segment. 
\end{thm}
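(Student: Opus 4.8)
The plan is to pass to the probabilistic picture of \S 1.2. Since $\tfrac14\int \mathbf 1_{\ell\cap\partial\Omega\neq\emptyset}\,d\mu=|\partial\Omega|/2$, the restriction of $\tfrac14\,d\mu$ to the lines meeting $\Omega$, rescaled to total mass $1$, is a probability measure $\mathbb P$; for rectifiable $\mathcal L\subset\Omega$ the count $X:=n_\ell(\mathcal L)$ is a nonnegative integer-valued random variable with $\mathbb E X=2L/|\partial\Omega|=:m$ (Crofton), and
$$\frac14\int n_\ell(\mathcal L)^2\,d\mu=\frac{|\partial\Omega|}{2}\,\mathbb E[X^2]=\frac{2L^2}{|\partial\Omega|}+\frac{|\partial\Omega|}{2}\operatorname{Var}(X),$$
so the Theorem is equivalent to $\{m\}(1-\{m\})\le \inf_{|\mathcal L|=L}\operatorname{Var}(X)\le\tfrac12$, with equality on the left in the stated range. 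The lower bound is then the elementary fact that every integer-valued $X$ with mean $m$ has $\operatorname{Var}(X)\ge\{m\}(1-\{m\})$: with $k=\lfloor m\rfloor$ one has $(X-k)(X-k-1)\ge0$ pointwise, hence $\mathbb E[X^2]\ge(2k+1)m-k(k+1)$ and $\operatorname{Var}(X)\ge(m-k)(k+1-m)=\{m\}(1-\{m\})$; sets with $\int n_\ell^2\,d\mu=\infty$ are irrelevant to the infimum.

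For the upper bound I would build competitors of variance $\le\tfrac12$. Write $n=\lfloor L/|\partial\Omega|\rfloor$ and $r=L-n|\partial\Omega|\in[0,|\partial\Omega|)$, and let $\mathcal L$ consist of $n$ nested homothetic copies of $\partial\Omega$ --- which, in the limit of vanishing spacing, act like a boundary curve of multiplicity $n$ and contribute $2n$ to $n_\ell$ for a.e.\ line meeting $\Omega$ --- together with a subset $S\subset\partial\Omega$ of length $r$, so that $n_\ell(\mathcal L)=2n+n_\ell(S)$ a.e.\ and $\operatorname{Var}(n_\ell(\mathcal L))=\operatorname{Var}(n_\ell(S))$. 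Since $S\subset\partial\Omega$ we have $n_\ell(S)=\mathbf 1_{p_1\in S}+\mathbf 1_{p_2\in S}\in\{0,1,2\}$, where $p_1,p_2$ are the two boundary crossings of the random line; each crossing is uniform on $\partial\Omega$ (Crofton applied to boundary arcs), so with $\beta:=r/|\partial\Omega|$,
$$\operatorname{Var}(n_\ell(S))=2\beta+2\,\mathbb P\!\left(p_1\in S,\ p_2\in S\right)-4\beta^2.$$
Now choose $S$ to be a union of $N$ short, equally spaced sub-arcs of total length $r$: the joint law of $(p_1,p_2)$ is absolutely continuous with respect to arclength, so equidistribution gives $\mathbb P(p_1\in S,\ p_2\in S)\to\beta^2$ as $N\to\infty$; equivalently, the two crossings become asymptotically independent and $n_\ell(S)$ tends in law to $\operatorname{Binomial}(2,\beta)$, of variance $2\beta(1-\beta)\le\tfrac12$. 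Hence $E_\Omega(L)\le 2L^2/|\partial\Omega|+|\partial\Omega|/4$.

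For the sharpness claim, suppose $n|\partial\Omega|\le L\le n|\partial\Omega|+\diam(\Omega)$, so $r=L-n|\partial\Omega|\le\diam(\Omega)\le|\partial\Omega|/2$ (the latter holds for any convex body, since each of the two boundary arcs joining a diametral pair has length $\ge\diam(\Omega)$). Taking $\mathcal L$ to be $n$ copies of $\partial\Omega$ together with a segment $S$ of length $r$ (which fits in $\Omega$ by convexity), one has $n_\ell(\mathcal L)=2n+\mathbf 1_{\ell\cap S\neq\emptyset}$ a.e., hence $\operatorname{Var}(n_\ell(\mathcal L))=p(1-p)$ with $p=2r/|\partial\Omega|=\{m\}$, which matches the lower bound; so equality holds and this set is extremal (in accordance with the first Theorem).

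The bookkeeping with Crofton's formula and the integer-variance inequality is routine. The genuine difficulty lies in the upper bound: producing a construction of variance $\le\tfrac12$ that works for \emph{every} convex $\Omega$. This rests on the equidistribution step $\mathbb P(p_1\in S,p_2\in S)\to\beta^2$, i.e.\ on the absolute continuity of the chord--endpoint distribution --- the one place where $C^1$-regularity of $\partial\Omega$ is really used --- and one must also take some care to realize ``$n$ copies of $\partial\Omega$'' as a legitimate admissible rectifiable competitor (through the nested-homothet limit, say).
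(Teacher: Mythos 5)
Your proposal is correct and follows essentially the same route as the paper: reformulate via the normalized kinematic measure on lines meeting $\Omega$, get the lower bound from the variance inequality for integer-valued random variables, build the upper-bound competitor from $n$ copies of $\partial \Omega$ plus many equidistributed short boundary arcs, and verify sharpness with $n$ copies of $\partial\Omega$ plus a line segment. Your micro-arguments are only slight variants of the paper's --- the integer-variance lemma via $(X-k)(X-k-1)\ge 0$ in place of the paper's mass-transport argument, a deterministic equidistribution limit in place of the paper's randomized selection of arcs, and the Bernoulli-variance computation for sharpness, which is exactly the ``alternative proof'' the paper sketches at the end of its Part 2.
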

A natural question is whether the function $E_{\Omega}(L) - 2L^2 /|\partial \Omega|$ is $|\partial \Omega|-$periodic in $L$. A stochastic way of interpreting this is that adding another copy of the boundary does not increase the variance of the expected number of intersection (since $\mu$-a.e. line intersects $\partial \Omega$ exactly twice, there is only a shift in the expectation).

\subsection{Opaque sets and related results} For a given domain $\Omega \subset \mathbb{R}^2$, an \textit{opaque set} is a one-dimensional set $\mathcal{L}$ such that every line intersecting $\Omega$ is also intersecting $\mathcal{L}$. The obvious question is: how short can
an opaque set be? The notion itself goes back to a 1916 paper of Mazurkiewicz \cite{maz}, the term `opaque' has been introduced in a 1959 paper of Bagemihl \cite{bag}. 
The problem has proven to be notoriously difficult, even the case of $[0,1]^2$ remains unsolved and poorly understood in the sense that the lower bounds are far from the conjectured extremizer (see Fig. 2). 
The only general lower bound (see e.g. \cite{dumi, jones}) is that an opaque set in a convex domain $\Omega \subset \mathbb{R}^2$ has to have length at least  $|\partial \Omega|/2$. It appears to be tremendously difficult to improve on this lower bound: in the case of the unit square $[0,1]^2$, the lower bound of $|\partial \Omega|/2$ was first proven by Jones \cite{jones} in 1964. The currently best result is due to  Kawamura, Moriyama, Otachi \& Pach \cite{kawa} and shows that any opaque set has to have length $2.0002$. We also refer to \cite{asi, brakke, dumi2, faber,faber2}.
The connection to our problem is seen by quickly recalling the proof of the lower bound $|\partial \Omega|/2$. Using $\mathcal{O}$ to denote the opaque set, we note that each line intersecting $\Omega$ has to intersect $\mathcal{O}$ at least once and thus, with Crofton's formula, just as above
$$ |\mathcal{O}| = \frac{1}{4} \int n_{\ell}(\mathcal{O}) d\mu(\ell) \geq  \frac{1}{4} \int 1_{\ell \cap \Omega \neq \emptyset}~ d\mu(\ell) = \frac{|\partial \Omega|}{2}.$$
The only way this argument could possibly be sharp is if most lines intersecting $\mathcal{O}$ intersect it exactly once. This, in turn, would imply that the variance is also minimized. We note that this can indeed `almost' occur: if we take a highly eccentric rectangle $[0,1] \times [0, \varepsilon]$, then there is an opaque set of size $1 + 2\varepsilon$ which is quite comparable to half the boundary (having length $1 + \varepsilon$) and the opaque set has very small variance since most lines that intersect the rectangle intersect it exactly once.

\begin{center}
\begin{figure}[h!]
\begin{tikzpicture}[scale=3]
\draw [thick] (-1.5, 0.5) circle (0.5cm);
   \draw [ultra thick,domain=180:360] plot ({-1.5 + 0.5*cos(\x)}, {0.5 + 0.5*sin(\x)});
   \draw [ultra thick] (-1, 0.5) -- (-1,1);
      \draw [ultra thick] (-2, 0.5) -- (-2,1);
\draw [thick] (0,0) -- (1,0) -- (1,1) -- (0,1) -- (0,0);
\draw [ultra thick] (0, 1) -- (0.2, 0.2) -- (1, 0);
\draw [ultra thick] (0, 0) -- (0.2, 0.2);
\draw [ultra thick] (0.5, 0.5) -- (1,1);
\end{tikzpicture}
\caption{Left: an opaque set for the unit disk of length $\pi + 2 < 2\pi$. Right: the conjectured shortest opaque set for the unit square with length $\sqrt{2} + \sqrt{3/2} \sim 2.63$.}
\end{figure}
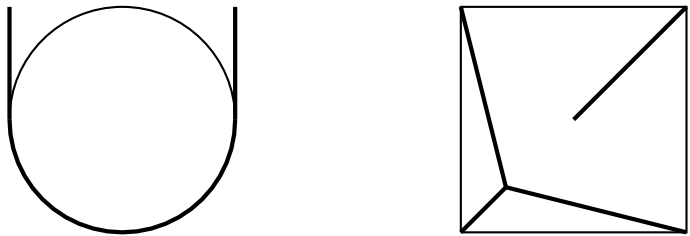
\end{center}

Unrelatedly, a recent result of the author \cite{stein} introduced this notion of energy when proving that
there exists a universal constant $c_n > 0$ depending only on the dimension so that for any bounded $\Omega \subset \mathbb{R}^n$ with $C^1-$boundary
$$ \int_{\partial \Omega \times \partial \Omega}  \frac{\left|\left\langle n(x), y - x \right\rangle  \left\langle  y - x, n(y) \right\rangle   \right| }{\|x -y\|^{n+1}}~d \sigma(x) d\sigma(y) \geq c_n |\partial \Omega|$$
with equality if and only if the domain $\Omega$ is convex. This inequality is how we originally got interested in the behavior of this energy in the first place.

\section{Proof of the Theorem}
We will prove the second formulation of the Theorem (which is the more detailed one) using the language of integral geometry. Statements for the first formulation follow easily from change of variables.

\subsection{A Lemma.} We start with a very simple inequality for random variables.
\begin{lemma}
Let $X \geq 0$ be an integer-valued random variable. Then
$$ \mathbb{E} X^2 \geq (\mathbb{E}X)^2 + \left\{ \mathbb{E} X \right\} - \left\{ \mathbb{E} X \right\}^2,$$
where $\left\{ x \right\} = x - \left\lfloor x \right\rfloor$. We have equality iff $X$ is supported on
$\left\{ \left\lfloor \mathbb{E}X \right\rfloor, \left\lfloor \mathbb{E}X \right\rfloor + 1\right\}$.
\end{lemma}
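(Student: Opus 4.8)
The plan is to reduce the statement to a finite optimization problem by a standard "mass-shifting" argument. First I would set $m = \lfloor \mathbb{E}X \rfloor$ and $\theta = \{\mathbb{E}X\}$, so that $\mathbb{E}X = m + \theta$ with $0 \le \theta < 1$. Fixing the value of $\mathbb{E}X$, I want to minimize $\mathbb{E}X^2$ over all nonnegative integer-valued random variables with that mean. The key observation is that if the distribution places positive mass on two integers $a < b$ with $b - a \ge 2$, then moving a small amount of mass from $a$ inward (toward $a+1$) and a compensating amount from $b$ inward (toward $b-1$), chosen so as to keep the mean fixed, strictly decreases $\mathbb{E}X^2$ because $x \mapsto x^2$ is strictly convex. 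Hence any minimizer is supported on two consecutive integers, and since its mean is $m+\theta \in [m, m+1)$, the support must be $\{m, m+1\}$ (with the convention that if $\theta = 0$ the distribution is the point mass at $m$).

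Next I would simply compute the value at this minimizing configuration. If $X$ takes value $m$ with probability $1-\theta$ and value $m+1$ with probability $\theta$ (forced by the mean constraint), then
$$ \mathbb{E}X^2 = (1-\theta)m^2 + \theta(m+1)^2 = m^2 + \theta(2m+1) = (m+\theta)^2 + \theta - \theta^2 = (\mathbb{E}X)^2 + \{\mathbb{E}X\} - \{\mathbb{E}X\}^2. $$
This establishes the inequality, with equality exactly for this two-point distribution, i.e. when $X$ is supported on $\{\lfloor \mathbb{E}X\rfloor, \lfloor \mathbb{E}X\rfloor + 1\}$. An alternative and even cleaner route avoiding the mass-shifting entirely is to note the pointwise inequality $k^2 \ge (2m+1)k - m(m+1) = (2\lfloor\mathbb{E}X\rfloor+1)k - \lfloor\mathbb{E}X\rfloor(\lfloor\mathbb{E}X\rfloor+1)$ valid for every integer $k$ (it says the parabola lies above its secant line through the points $(m,m^2)$ and $(m+1,(m+1)^2)$, and for integers $k$ outside $[m,m+1]$ it is strict); taking expectations gives $\mathbb{E}X^2 \ge (2m+1)\mathbb{E}X - m(m+1)$, and substituting $\mathbb{E}X = m+\theta$ yields the claim, with equality iff $X$ is supported where the pointwise inequality is tight, namely on $\{m, m+1\}$.

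I expect the only real subtlety — hardly an obstacle — to be bookkeeping around the boundary case $\theta = 0$ (where the "two-point" support degenerates to the single point $m$, and indeed $\{\mathbb{E}X\} - \{\mathbb{E}X\}^2 = 0$, so the bound reads $\mathbb{E}X^2 \ge (\mathbb{E}X)^2$, which is just nonnegativity of the variance with equality iff $X$ is a.s. constant) and making sure the equality characterization is stated cleanly to cover it. The second approach via the secant-line inequality handles this uniformly and is the one I would write up, since it sidesteps any perturbation argument and makes the equality case transparent.
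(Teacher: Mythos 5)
Your proposal is correct, and in fact contains two valid arguments. Your first route (reduce to a minimizer supported on two consecutive integers by shifting mass inward from any two support points at distance $\geq 2$, using strict convexity of $x \mapsto x^2$, then compute at the forced two-point distribution) is essentially the paper's proof: the paper likewise computes the variance $\alpha - \alpha^2$ of the two-point distribution on $\{k, k+1\}$ and then argues by moving mass toward $\{k,k+1\}$ while preserving the expectation; your version of the perturbation step is actually stated more carefully than the paper's (the paper's claim that a non-extremal distribution must have mass both below $k$ and above $k+1$ is not literally needed and not quite accurate, whereas your pairwise inward shift works in general). Your second route is genuinely different and cleaner: the pointwise bound $k^2 \geq (2m+1)k - m(m+1)$, i.e.\ $(k-m)(k-m-1)\geq 0$ for all integers $k$ with equality exactly at $k \in \{m, m+1\}$, integrates directly to the claimed inequality and makes the equality characterization immediate. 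It also sidesteps the existence-of-a-minimizer and iteration issues implicit in any perturbation argument, and handles the boundary case $\{\mathbb{E}X\}=0$ uniformly, as you note. Either write-up would be acceptable; the secant-line version is the tighter one.
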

\begin{proof} Suppose $k \leq \mathbb{E} X < k+1$.
 We write
$$ \mathbb{E} X = k + \alpha.$$
There is a natural candidate for the most concentrated random variable: the one concentrated at $k$ and $k+1$. 
For that distribution, we see that $ \mathbb{P}(X = k) = 1-\alpha$ and  $\mathbb{P}(X=k+1) = \alpha.$
Then the variance is given by
$$ \mathbb{V}X = \mathbb{E}(X-\mathbb{E}X)^2 = \alpha^2(1-\alpha) +  (1-\alpha)^2 \alpha = \alpha - \alpha^2$$
and the result follows from $ \mathbb{V}X = \mathbb{E}X^2 - (\mathbb{E}X)^2.$  It remains to show that this is the
extremal case: if $\mathbb{P}(X=k) < 1-\alpha$, then we necessarily also have $\mathbb{P}(X=k+1) < \alpha$ which
shows that there has to be probability mass supported on $\left\{X < k\right\}$ and $\left\{X > k+1\right\}$. We can
move some mass towards $k$ and $k+1$ in such a way as to preserve the expectation and obtain the result from
monotonicity.
\end{proof}

\subsection{Proof of the Theorem}
\begin{proof} We separate the proof into several relatively independent parts.\\
 \textit{1. Lower Bound.} We start with Crofton's formula saying that for rectifiable $H \subset \mathbb{R}^2$
$$ |H| = \frac{1}{4} \int n_{\ell}(H) d\mu(\ell).$$
We can use this to compute the likelihood of a line hitting the convex domain $\Omega \subset \mathbb{R}^2$: since almost all
(with respect to the kinematic measure) lines that hit a convex domain hit it exactly twice, we have
$$  \mu(\Omega) =  \int 1_{\ell \cap \Omega \neq \emptyset}~ d\mu(\ell)  = \frac{1}{2}   \int n_{\ell}(\partial\Omega) ~d\mu(\ell) = 2 \cdot | \partial \Omega|.$$
This allows us to define a natural probability space: the set of all lines that intersect $\Omega$ equipped with a suitable rescaling of the kinematic measure which,
by the previous computation, is going to be 
$$ \nu(\ell) =1_{\ell \cap \Omega \neq \emptyset} \cdot \frac{\mu(\ell)}{2 |\partial \Omega|}.$$
which turns it into a probability space. Note that this implies, in particular, that
$$  \int n_{\ell}( \partial \Omega) ~d\nu(\ell) = 2.$$
Moreover, the Crofton formula applied to a subset $\mathcal{L} \subset \Omega$ transforms to
$$ L = \frac{1}{4} \int n_{\ell}(\mathcal{L}) d\mu(\ell) =\frac{|\partial \Omega|}{2} \int n_{\ell}(\mathcal{L}) \frac{d\mu(\ell)}{2 |\partial \Omega|} = \frac{|\partial \Omega|}{2} \int n_{\ell}(\mathcal{L}) d\nu(\ell)$$
which can be rewritten as
$$ \frac{2 L}{|\partial \Omega|} =   \int n_{\ell}(\mathcal{L}) ~d\nu(\ell)$$
which is the expected value of the number of intersections in this probability space.
Using the Lemma, we deduce that
\begin{align*}
  \int n_{\ell}(\mathcal{L})^2 ~d\nu(\ell) &\geq \left( \int n_{\ell}(\mathcal{L}) ~d\nu(\ell) \right)^2 + \left\{  \int n_{\ell}(\mathcal{L}) ~d\nu(\ell) \right\} - \left\{ \int n_{\ell}(\mathcal{L}) ~d\nu(\ell)\right\}^2 \\
  &\geq \frac{4L^2}{|\partial \Omega|^2} + \left\{  \frac{2L}{|\partial \Omega|} \right\} -  \left\{  \frac{2L}{|\partial \Omega|} \right\}^2.
  \end{align*}
Rescaling back, we find that
$$ \frac{1}{4} \int n_{\ell}(\mathcal{L})^2 d\mu(\ell) \geq  \frac{2L^2}{|\partial \Omega|} + \frac{|\partial \Omega|}{2}\left\{  \frac{2L}{|\partial \Omega|} \right\} -  \frac{|\partial \Omega|}{2}\left\{  \frac{2L}{|\partial \Omega|} \right\}^2$$
which is the desired lower bound. \\

\textit{2. Sharpness of the lower bound.} Let us say that $\mathcal{L}$ is comprised of $k$ copies of $\partial \Omega$ and a line segment $K$. Then $L = |\mathcal{L}| = k |\partial \Omega| + |K|$ and
\begin{align*}
\frac{1}{4} \int n_{\ell}(\mathcal{L})^2 ~d\mu(\ell) &= \frac{1}{4} \int 1_{\ell \cap \Omega \neq \emptyset} \cdot (2k + n_{\ell}(K))^2 ~d\mu(\ell) \\
&=  \frac{1}{4} \int 1_{\ell \cap \Omega \neq \emptyset} \cdot (4 k^2 +4k  n_{\ell}(K) + n_{\ell}(K)^2) ~d\mu(\ell).
\end{align*}
Since $K$ is a line segment, we have that $n_{\ell}(K)^2 = n_{\ell}(K)$ and may use this to deduce
\begin{align*}
\frac{1}{4} \int n_{\ell}(\mathcal{L})^2 ~d\mu(\ell) &=  \frac{1}{4} \int 1_{\ell \cap \Omega \neq \emptyset} \cdot (4 k^2 +4k  n_{\ell}(K) + n_{\ell}(K)) d\mu(\ell)  \\
&= k^2 \mu(\Omega) + \frac{1}{4} \int 1_{\ell \cap \Omega \neq \emptyset}  (4 k +1)   n_{\ell}(K)  d\mu(\ell).
\end{align*}
We already computed above that $\mu(\Omega) = 2 \cdot |\partial \Omega|$. Moreover,
\begin{align*}
  \frac{1}{4} \int 1_{\ell \cap \Omega \neq \emptyset}  (4 k +1)   n_{\ell}(K)  d\mu(\ell) &=  \frac{1}{4} \int   (4 k +1)   n_{\ell}(K)  d\mu(\ell) \\
  &= (4k+1) |K|.
  \end{align*}
  Altogether, we deduce, also recalling  $L = k |\partial \Omega| + |K|$ that
\begin{align*}
\frac{1}{4} \int n_{\ell}(\mathcal{L})^2 ~d\mu(\ell)  &=2k^2 |\partial \Omega| + (4k+1) |K| \\
&= \frac{2 L^2}{|\partial \Omega|} +|K| \left(1 - \frac{2 |K|}{|\partial \Omega|}\right).
\end{align*}
Since $|K| \leq \diam(\Omega) \leq |\partial \Omega|/2$, we have
$$ \left\{  \frac{2L}{|\partial \Omega|} \right\} = \left\{  \frac{2k|\partial \Omega| + 2|K|}{|\partial \Omega|} \right\} = \frac{2 |K|}{|\partial \Omega|}$$
and the lower bound proved above can be rewritten as
\begin{align*}
\frac{1}{4} \int n_{\ell}(\mathcal{L})^2 d\mu(\ell) &\geq  \frac{2L^2}{|\partial \Omega|} + \frac{|\partial \Omega|}{2}\left\{  \frac{2L}{|\partial \Omega|} \right\} -  \frac{|\partial \Omega|}{2}\left\{  \frac{2L}{|\partial \Omega|} \right\}^2 \\
&=\frac{2L^2}{|\partial \Omega|} + |K| \left(1 - \frac{2 |K|}{|\partial \Omega|}\right).
\end{align*}
There exists an alternative proof that argues as follows: if $\mathcal{L}$ is comprised of $k$ copies of $\partial \Omega$ and a line segment $K$, then ($\mu-$almost all) lines intersect $\mathcal{L}$ either exactly $2k$ or $2k+1$ times. The precise likelihood of each event is then uniquely determined by the expectation, which by Crofton is completely determined by the length, and, going through the proof of the Lemma, we see that we have equality.\\

\textit{3. An upper bound.} It remains to prove that
$$ E_{\Omega}(L) - \frac{2L^2}{|\partial \Omega|}  \leq \frac{|\partial \Omega|}{4}.$$
Let us assume that $L = (k+\alpha) |\partial \Omega|$, where
$k \in \mathbb{N}$ and $0 \leq \alpha < 1$. We take $\mathcal{L}$ to be $k$ copies of the boundary $\partial \Omega$,
then split the boundary into many small segments of equal length and add each of these small sets to $\mathcal{L}$ with likelihood given by $\alpha$. Taking a limit, we see that in expectation we
expect for a random construction of this type that
\begin{align*}
  \mathbb{E} \frac{1}{4} \int n_{\ell}(\mathcal{L})^2 d\mu(\ell) &=    \frac{1}{4} \int X \cdot 1_{\ell \cap \Omega \neq \emptyset} d\mu(\ell) = \frac{X}{2} |\partial \Omega|
  \end{align*}  
where
$$ X = \left[ (1-\alpha)^2 (2k)^2 + 2\alpha(1-\alpha) (2k+1)^2 + \alpha^2 (2k+2)^2 \right]$$
is the likelihood of a random line passing through $2k, 2k+1$ and $2k+2$ points on the boundary, respectively.
We obtain
\begin{align*}
  \mathbb{E} \frac{1}{4} \int n_{\ell}(\mathcal{L})^2 d\mu(\ell) &\leq (2k^2 + 4\alpha k + \alpha^2 + \alpha) |\partial \Omega|.
  \end{align*}  
  Recalling $L = (k+\alpha) |\partial \Omega|$, we obtain
  $$ E_{\Omega}(L) \leq \frac{2 L^2}{|\partial \Omega|} + (\alpha - \alpha^2) |\partial \Omega| \leq \frac{2 L^2}{|\partial \Omega|} +  \frac{ |\partial \Omega|}{4}.$$
  
\textit{4. The case of higher dimensions.} We use (see \cite{stein}) that for two universal constants $c_n, c_n^* > 0$ that only depend on the
dimension
$$ c_n \int n_{\ell}(\mathcal{L})^2 d\mu(\ell) - \mathcal{H}^{n-1}(\mathcal{L}) = c_n^* \int_{\mathcal{L}} \int_{\mathcal{L}}\frac{\left|\left\langle n(x), y - x \right\rangle  \left\langle  y - x, n(y) \right\rangle   \right| }{\|x - y\|^{n+1}}~d \sigma(x) d\sigma(y)$$
and thus, having fixed the surface area, minimizing either of these quantities is equivalent to minimizing the other. We try to minimize the quadratic Crofton functional. Crofton's formula implies that fixing $ \mathcal{H}^{n-1}(\mathcal{L}) $ is fixing the expected number of intersections. Appealing to the Lemma, we see that the expected squared intersection number is minimized if $ n_{\ell}(\mathcal{L})$ assumes at most 2 (adjacent) values. For convex $\Omega \subset \mathbb{R}^n$ taking $m$ copies of the boundary $\partial \Omega$ and possibly adding a hyperplane segment leads to a set for which the intersection number $ n_{\ell}(\mathcal{L})$ is either $2m$ or $2m+1$ for almost all lines $\ell$ and we conclude the result.
\end{proof}

\textbf{Acknowledgments.} I am grateful to Alan Chang for pointing out \cite{chang} and valuable discussions.

\end{document}